\newtheorem{theorem}{Theorem}[section]
\newtheorem{lemma}[theorem]{Lemma}
\theoremstyle{definition}
\theoremstyle{remark}
\numberwithin{equation}{section}
\newcommand{\blankbox}[2]{%
  \parbox{\columnwidth}{\centering
    \setlength{\fboxsep}{0pt}%
    \fbox{\raisebox{0pt}[#2]{\hspace{#1}}}%
  }%
}
\newcommand\xrowht[2][0]{\addstackgap[.5\dimexpr#2\relax]{\vphantom{#1}}}
\begin{document}

\title{Counting cusp excursions of reciprocal geodesics}

\author{Ara Basmajian}
\address{PhD Program in Mathematics, The Graduate Center, CUNY, 365 Fifth  ave., NY,NY, 10016 and Department of Mathematics and Statistics, Hunter College, CUNY, 695 Park ave. NY, NY, 10065, USA}
\curraddr{}
\email{abasmajian@gc.cuny.edu}
\thanks{The first author was supported by a PSC-CUNY Grant and a grant from the Simons foundation (359956, A.B.)}

\author{Robert Suzzi Valli}
\address{Department of Mathematics, Manhattan College, 4513 Manhattan College Pkwy., Riverdale, NY, 10471}
\email{robert.suzzivalli@manhattan.edu}

\subjclass{Primary 20F69, 32G15, 57K20; Secondary 20H10, 53C22.}

\dedicatory{Dedicated to  Ravi S. Kulkarni on his 80th birthday}

\keywords{Reciprocal geodesic, modular group, modular surface, cusp excursion, word length}

\begin{abstract}
  For a fixed cusp neighborhood (determined by  depth $D$) of the modular surface, we  investigate the class of reciprocal geodesics  that  enter this   neighborhood (called a cusp excursion)  a fixed number of times. Since reciprocal geodesics traverse their  image an even number of times  such a geodesic only has an even number of cusp excursions,  say $2n$. Denoting this  class of geodesics as 
  $\mathcal{E}_{2n}^{D}$,  we  derive several growth rate results for 
  $|\{\gamma \in \mathcal{E}_{2n}^{D} : |\gamma|= 4t \}|$,
  where $|\gamma|$ is  the word length of $\gamma$ in 
 $\mathbb{Z}_2\ast \mathbb{Z}_3$ with respect to the generators of the  factors.

\end{abstract}

\maketitle
%
%

\section{Introduction}

For a fixed cusp neighborhood determined by its depth $D$,
the paper \cite{BasSuz} was interested in counting the number of so called 
$D$-low lying reciprocal geodesics on the modular surface. That is, in counting the  reciprocal geodesics that stay in the compact subsurface determined  by chopping off the $D$-cusp neighborhood.
In this paper, we  are interested in counting  the number of reciprocal geodesics on the modular surface
 that  leave this compact subsurface, called a cusp excursion,   a fixed number of times. Since reciprocal geodesics traverse their  image an even number of times  such a geodesic only has an even number of cusp excursions,  say $2n$. Denoting this  class of geodesics as 
  $\mathcal{E}_{2n}^{D}$,  we  derive  growth rate results 
  (Theorem  \ref{thm: main theorem}) for 
  $|\{\gamma \in \mathcal{E}_{2}^{D} : |\gamma|= 4t \}|$,
  and   $|\{\gamma \in \mathcal{E}_{2n}^{1} : |\gamma|= 4t \}|$.
Before we  state our theorem we first define our terms and set up  language.

 Consider the group  $G=\mathbb{Z}_2\ast \mathbb{Z}_3$ with 
 its natural symmetric generators   $a$, $b$, and $b^{-1}$ where 
 $a \in  \mathbb{Z}_2$ has order two, and $b \in \mathbb{Z}_3$ has order 3.  The representation of $G$ given by,
 $a \mapsto A$ and $b \mapsto B$ where,

$$
A=\begin{pmatrix} 
     0 & -1   \\
     1 & 0 
\end{pmatrix}, \text{ and } B=\begin{pmatrix} 
     1 & -1    \\
     1 & 0 
\end{pmatrix}
$$
 is a discrete faithful representation with image the modular group $PSL(2,\mathbb{Z}).$
The quotient  $S=\mathbb{H}/PSL(2,\mathbb{Z})$, called the {\it modular surface},  is a generalized pair of pants of  genus  zero and signature $(2,3,\infty)$.  For $\gamma$
a closed geodesic on $S$, its {\it word length}, denoted 
 $|\gamma|$, is the minimal word length 
 (with respect to the natural 
 generators of the factors in $\mathbb{Z}_2\ast \mathbb{Z}_3$)
 among the elements of the conjugacy class of a  lift
in  $PSL(2,\mathbb{Z})$.

Now, it is well-known that  if an orbifold surface has a cusp then it has a  cusp neighborhood, $\text{say }\mathcal{C}$, of area one  with  boundary a horocycle segment of length one.  For $D$ a positive integer, 
we define the {\it $D$-thick  part}    of $S$,
denoted $S_D$,  to be the smallest compact subsurface  of $S$ with horocycle boundary  which contains  all the  closed geodesics which when written in  normal form,  $ab^{\epsilon_1} ... ab^{\epsilon_t}$, 
($\epsilon_i=\pm 1 , \epsilon_1 \neq \epsilon_t$)
have   at most  $D$ consecutive $\epsilon_i 's$ of the same sign. The number of consecutive $\epsilon_i 's$ of the same sign (called a run)  correspond to how deep the geodesic goes into the cusp.   (See for example Lemma 7.1  in \cite{BasSuz}). Hence  it is natural to associate the geometric depth into the cusp with the size of a run; that is, the highest power of the parabolic. The $D$-thick parts form a compact exhaustion of $S$.

We  say that a closed geodesic (including reciprocal geodesics)
$\gamma$ has {\it $k$ (cusp) excursions  of depth greater than $D$}  if during one periodic orbit  $\gamma$ enters $S-S_D$  exactly $k$-times. See Figure \ref{fig1:cuspexcurs} for an example of a cusp excursion where $k=1$.

A {\it  reciprocal geodesic} on the modular surface is a  geodesic that begins and ends at the order two cone point, traversing its image twice if it is primitive, and more generally traversing the image $2m$-times  if it is the $m^{th}$ power of a reciprocal geodesic. Its lift is the  conjugacy class in $PSL(2,\mathbb{Z})$ of a hyperbolic element with axis passing through an order two fixed point. In the  paper \cite{BasSuz},  reciprocal geodesics that stay  in $S_D$ (so called $D$-low lying reciprocal geodesics) were studied.  In this paper, we are interested in the reciprocal geodesics that leave the $D$-thick part a prescribed (necessarily even) number of times (cusp excursions).
To that end,  for each  integer $n \geq 0$, define  

$$\mathcal{E}_{2n}^D=\{\gamma : \gamma \text{ a reciprocal geodesic that enters $S-S_D$  exactly   $2n$-times}\}
$$
Equivalently, $\gamma \in \mathcal{E}_{2n}^D$ if when represented in normal form  its
  associated run sequence has exactly $2n$ parts each  of size  at least  $D+1$. With this in mind, the reciprocal geodesics are partitioned into the disjoint sets

$${{\bigcup}_{n=0}^{\infty}}  \mathcal{E}_{2n}^D $$

\noindent The set  $\mathcal{E}_{0}^D$ is exactly the $D$-low lying reciprocals whose asymptotic growth rate is
$d_D \alpha_{D}^{t}$, where $d_D$ is given by the coefficient in  expression
(\ref{boundedcompositions}), 
and $\alpha_{D}$ is  the unique positive root of $z^D-z^{D-1}-\dots-1$. In stark contrast,   the set of all reciprocal geodesics has asymptotic growth  $2^{t-1}$ (see \cite{BasSuz}
for details).
\begin{figure}

\begin{overpic}[scale=.45]{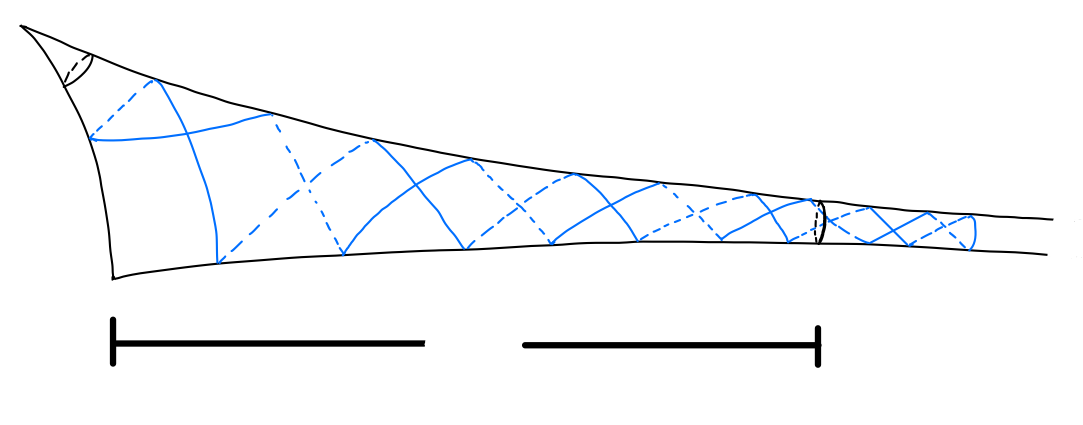}
\put(0,36){$3$}
\put(9.5,11){$2$}
\put(42.5,6.5){$S_D$}
\put(33,20){$\gamma$}
\end{overpic}
\caption{A cusp excursion of depth greater than $D$ on $S$}
\label{fig1:cuspexcurs}
\end{figure}
 So the first non-trivial excursions occur  in  $\mathcal{E}_{2}^{D}$. In this note we study the growth of reciprocal geodesics with a fixed number of excursions. Specifically, we are interested in how the word lengths of the geodesics in  
$\{\gamma \in \mathcal{E}_{2n}^{D} : |\gamma|= 4t \}$
grow  as a function of $t$.  
In this paper, we address the growth of reciprocal geodesics  with any number of excursions greater than depth $1$, and the growth of reciprocal geodesics with two excursions of depth greater than $D$.

\begin{theorem} For any integers $n\geq 0$,
 \label{thm: main theorem}
\begin{equation}
\frac{\big|\{\gamma \in  \mathcal{E}_{2n}^{1}: |\gamma|=4t\}\big|}
{t^{2n}} \xrightarrow{t\to\infty} \frac{1}{2n!} 
\end{equation}
\,\,\,\,\,\,\,\,\,\,\, \text{For  any integer $D\geq 2$},
\begin{equation} \label{eq: 2 excursions depth D}
 \frac{\big|\{\gamma \in  \mathcal{E}_2^D: |\gamma|=4t\}\big|}
{t  \alpha_D^{t}} \xrightarrow{t\to\infty} 
\frac{d_D^2}{\alpha_D^{D}(\alpha_D -1)}
\end{equation}
where $d_{D}=\frac{\alpha_D -1}{2+(D+1)(\alpha_D -2)}$,
and  $\alpha_D$ is the unique positive root of $z^D-z^{D-1}-\dots-1$.
\end{theorem}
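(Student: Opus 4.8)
The plan is to turn each count into the extraction of a single coefficient from an explicit rational generating function, and then read the asymptotics off the dominant singularity. First I would fix the combinatorial model. Using the description of a reciprocal geodesic as an arc based at the order‑two cone point and traversed out‑and‑back, its lift is conjugate to an element of the form $auau^{-1}$; writing $u$ in normal form, one checks that the $b$‑exponent sequence of $auau^{-1}$ is the anti‑palindrome $(c_1,\dots,c_j,-c_j,\dots,-c_1)$, so that its run sequence is the palindrome $(\rho_1,\dots,\rho_l,\rho_l,\dots,\rho_1)$ obtained by doubling the run sequence $(\rho_1,\dots,\rho_l)$ of the half‑word $u$. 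Because every part is repeated, the full run sequence has exactly $2n$ parts of size $\ge D+1$ exactly when $(\rho_1,\dots,\rho_l)$ has exactly $n$ such parts, and the word length is $|\gamma|=4\sum_i\rho_i$. Hence, up to overcounting that is negligible in the leading term (cyclic rotations of the palindrome, the global sign, and imprimitive words), $\big|\{\gamma\in\mathcal E_{2n}^D:|\gamma|=4t\}\big|$ equals the number of compositions of $t$ with exactly $n$ parts of size $\ge D+1$.

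Next I would assemble the generating function. Marking a small run (size in $\{1,\dots,D\}$) by $S(z)=z+\cdots+z^D$ and a large run (size $\ge D+1$) by $\ell(z)=z^{D+1}/(1-z)$, a composition with exactly $n$ large parts is an alternation of $n+1$ (possibly empty) blocks of small parts with $n$ large parts, so the count is $[z^t]F_{n,D}(z)$ where
\[
F_{n,D}(z)=\frac{\ell(z)^{\,n}}{\bigl(1-S(z)\bigr)^{n+1}}
=\ell(z)^{\,n}\,\frac{(1-z)^{n+1}}{h(z)^{n+1}},\qquad h(z)=1-2z+z^{D+1}.
\]
The identity $1-S(z)=h(z)/(1-z)$ shows $F_{n,D}$ is rational with dominant singularity at the least positive root $z_0$ of $h$, and $z_0=1/\alpha_D$ is precisely the reciprocal of the root in the statement (equivalently $\alpha_D^{D+1}-2\alpha_D^D+1=0$, whence $\alpha_D^{-D}=2-\alpha_D$). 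The case $n=0$ recovers the known $\mathcal E_0^D\sim d_D\alpha_D^t$ and fixes the normalization $d_D=(1-z_0)/(-z_0 h'(z_0))$, which I would use to recognize the constants below.

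For the second limit ($D\ge 2$, $n=1$) the function $F_{1,D}(z)=z^{D+1}(1-z)/h(z)^2$ has a double pole at $z_0$, so a standard transfer/partial‑fraction computation gives $[z^t]F_{1,D}\sim A\,t\,\alpha_D^{t}$ with $A=\ell(z_0)\,(1-z_0)^2/(z_0^2 h'(z_0)^2)$. Substituting $h'(z_0)=-2+(D+1)z_0^{D}$, simplifying by $z_0^{-D}=2-\alpha_D$, and recognizing $\ell(z_0)=1/(\alpha_D^{D}(\alpha_D-1))$ together with $-z_0 h'(z_0)=(1-z_0)/d_D$, the constant collapses to
\[
A=d_D^{\,2}\,\ell(z_0)=\frac{d_D^{\,2}}{\alpha_D^{D}(\alpha_D-1)},
\]
which is exactly $d_D^{\,2}\cdot\ell(1/\alpha_D)$, as claimed. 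For the first limit the exponent $D=1$ is the degenerate case $\alpha_1=1$: here $S(z)=z$, $h(z)=(1-z)^2$, and $F_{n,1}(z)=z^{2n}/(1-z)^{2n+1}$, a pole of order $2n+1$ at $z=1$. Thus $[z^t]F_{n,1}=\binom{t}{2n}\sim t^{2n}/(2n)!$, which grows polynomially and yields the asserted limit.

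The hard part will be the first step: pinning down the combinatorial model exactly, that is, proving that the correspondence between reciprocal geodesics in $\mathcal E_{2n}^D$ of length $4t$ and compositions of $t$ with $n$ large parts is a bijection up to an error that does not affect the leading asymptotics. This requires controlling the palindromic and cyclic redundancy, the two sign choices for the half‑word $u$ (which must be identified, as the $n=0$ calibration against $d_D\alpha_D^t$ shows), and the contribution of imprimitive reciprocals; each should contribute only lower‑order terms, but verifying this carefully is where the real work lies. Once $F_{n,D}$ is established, the remaining singularity analysis is routine, the only subtlety being the collapse from exponential to polynomial growth at the degenerate value $\alpha_1=1$.
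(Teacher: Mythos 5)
Your combinatorial model is the same one the paper uses, and the worry you flag as ``the hard part'' is in fact resolved \emph{exactly}, not just asymptotically: the paper shows $|N_{4t}|=|C_t|$ on the nose by composing two $2$-to-$1$ maps that cancel --- each reciprocal conjugacy class of length $4t$ has exactly two normal-form representatives $ab^{\epsilon_1}\cdots ab^{\epsilon_t}ab^{-\epsilon_t}\cdots ab^{-\epsilon_1}$ (a fact imported from Sarnak and from Lemma 3.6 of \cite{BasSuz}), and these pair up precisely with the global sign identification $X_t\to PX_t$. Imprimitive reciprocals are included in the count by definition, so no correction is needed there either; the bijection with $C_t^{n,D}$ restricts cleanly to each subclass. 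Once that is granted, your route diverges from the paper's and is, if anything, cleaner. The paper counts $C_t^{1,D}$ by summing $|C_{k-1,D}|\,|C_{t-k-r+1,D}|$ over the position $k$ and length $r$ of the single long run, substitutes the rounded Binet-type formula (\ref{boundedcompositions}) with its $\pm\tfrac12$ error, and estimates the resulting four double sums term by term (Lemma \ref{lem: growth of terms}); for $D=1$ it counts $PX_t^{(n)}$ directly as $\binom{t}{2n}$. You instead package everything into $F_{n,D}(z)=\ell(z)^n/(1-S(z))^{n+1}$ and read off the asymptotics from the pole at $z_0=1/\alpha_D$ (double pole for $n=1$, $D\ge 2$; order $2n+1$ pole at $z=1$ when $D=1$, recovering the exact value $\binom{t}{2n}$). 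Your constant evaluations check out: $\ell(z_0)=1/(\alpha_D^D(\alpha_D-1))$, $d_D=(1-z_0)/(-z_0h'(z_0))$, and hence $A=d_D^2\,\ell(z_0)$ as required (there is a harmless sign slip where you write $z_0^{-D}=2-\alpha_D$; the correct identity is $z_0^{D}=\alpha_D^{-D}=2-\alpha_D$). What your approach buys is uniformity: the same rational function handles all $n$ and $D$ simultaneously, including the general case the paper explicitly defers to a later work, whereas the paper's elementary summation keeps everything in closed form (giving explicit upper and lower bounds for finite $t$, not just the limit) at the cost of redoing the estimate for each new case.
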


In the sequel, Theorem  \ref{thm: main theorem} and its proof are  done in  two parts,  Theorems \ref{thm: 2n excursions} and 
\ref{thm: 2 excursions D-depth}.
For the set  
$\{\gamma \in  \mathcal{E}_{2n}^{1}: |\gamma|=4t\}$,  we in fact 
obtain a precise formula,  see 
Theorem  \ref{thm: 2n excursions}.
Moreover in the course of the proof of Theorem 
\ref{thm: 2 excursions D-depth}, we derive   coarse bounds (in closed form)  for the  set
 $\{\gamma \in  \mathcal{E}_{2}^{D}: |\gamma|=4t\}.$
As a result, in both cases,   it is straightforward  to  write down the growth rates of  $|\{\gamma \in  \mathcal{E}_{2n}^{1}: |\gamma|\leq 4t\}|$
  and  $|\{\gamma \in  \mathcal{E}_{2}^{D}: |\gamma|\leq 4t\}|$,
  as $t \rightarrow \infty$. Finally, we note that the  asymptotic constant in expression  (\ref{eq: 2 excursions depth D}) is only dependent on the depth $D$ and goes to zero  as $D \rightarrow \infty$. The more general setting of counting any number of excursions of depth bigger than $D$ will be addressed elsewhere. 
  
  The first author would like to thank the University of Michigan and the University of Luxembourg for their support while parts of this paper were being completed. In particular, thanks go to Richard D. Canary (University of Michigan) and 
  Hugo Parlier (University of Luxembourg)  for their hospitality during the respective visits.


\section{Some background and preliminaries}

There is an extensive history surrounding the modular group as well as an extensive history around  the question of geodesic growth in a hyperbolic surface. For the study of growth from a geometric point of view we refer to the papers, \cite{Boca,B-K2,B-K3,Bus,ErPaSo,ErSo,Mirz,Sar}.  From an algebraic point of view for the study of surface groups and free groups we refer to the papers,
\cite{CalLou,Er,GubSap,Mar,Park,Ri, Tra}. 
 Papers involving normal forms, enumeration schemes for curves, Farey arithmetic, and generating  elements in a non-abelian  free group can be found in \cite{BasLiu, Gil1,Gil2,GilKeen1,GilKeen2,GilKeen3,GilKeen4,Ser}.
 Papers specifically about reciprocal geodesics appear in  \cite{BasSuz, Boca, B-K3, ErSo1, Mar, Sar}.
 There is an extensive literature on cusp excursions by a random geodesic on a hyperbolic surface \cite{Haas1, Haas2, Haas3, Haas4, Haas5, MelPes, Mor, Poll, Ran, Strat, Sull, Trin}. In particular, the  papers    \cite{Sull, Haas1, Haas2, Haas3, Haas4, Haas5} investigate  the relation between depth, return time,  and other invariants in various contexts including connections to number theory.

  A closed geodesic in $S$ corresponds to a conjugacy class of hyperbolic elements in $G$. Using the fact that $ab$ and $ab^{-1}$ are parabolic elements,  a conjugacy class of hyperbolic elements  in  $PSL(2,\mathbb{Z})$ has a   representative of the  form  $ab^{\epsilon_1} ... ab^{\epsilon_t}$, where $\epsilon_i=\pm 1$  and $\epsilon_1 \neq \epsilon_t$. 
 Being  a product of the parabolic elements $ab$ and $ab^{-1}$, if $k$ consecutive $\epsilon_i$ have the same sign, then 
 $\gamma$ wraps around the cusp $k$-times, and hence the hyperbolic geometry dictates that  the geodesic $\gamma$ goes deeper into the cusp. 

  Now,  to a   $t$-tuple 
 $(\epsilon_1,...,\epsilon_t)$ we associate a {\it run}  sequence 
 ({\it composition} of $t$)
 $(r_1,...,r_k)$, where $r_1$ is the number of consecutive 
 $\epsilon_i$ of the same sign starting with $\epsilon_1$, $r_2$ is the length of the next such sequence, and so on. For example,  $(1,1,1,-1,1,-1,-1,1)$ corresponds to the composition
of 8,  $(3,1,1,2,1)$. Thus a composition of $t$, denoted
$C_{t}$,  is a sequence of positive integers that sum to $t$. The individual summands  of a composition  are  called its {\it parts}.
When the parts are at most $m$, we denote that subset of 
$C_{t}$ by $C_{t,m}$. Lastly, we denote a composition with exactly  $n$ parts that are bigger than $D$, and all other parts at most  $D$ by $C_{t}^{n,D}$.

 Geometrically the length of the run corresponds to how deep the geodesic goes into the cusp.   (See lemma 7.1  in \cite{BasSuz}).   Namely, for $D>0$, we say  a  closed geodesic  (reciprocal geodesics are included among the closed geodesics) is said to have a {\it cusp excursion}  (excursion for short) of depth bigger than $D$ if there is a run longer than $D$ in its normal form.  For reciprocal geodesics  note that the requirement that its orbit  be traversed an even number of times  is necessary to guarantee that the geodesic corresponds to an element of the modular group. Namely, a lift of such a geodesic corresponds to the axis of a hyperbolic element which passes through  an order two fixed point.  Since there is only one conjugacy class of order two fixed points  in $PSL(2,\mathbb{Z})$, this hyperbolic element can be conjugated so that its axis passes through $i$, the fixed point of $A$. As a consequence, a  word representing a reciprocal element (as an abstract group element) is conjugate to a word of the form  $a(\gamma a \gamma^{-1})$ and thus its translation length is realized on the surface as a geodesic that retraces its steps.  In terms of the generators of  $G=\mathbb{Z}_2\ast \mathbb{Z}_3$ it
has the {\it normal form},
$ab^{\epsilon_1} ... ab^{\epsilon_t} 
ab^{-\epsilon_t}...ab^{-\epsilon_1}$, where $\epsilon_i=\pm 1$.
Note  that the word length of a reciprocal geodesic is always a multiple of 4. The reciprocal word has a cusp excursion of depth 
bigger than $D$  if there are  at least (D+1) consecutive  
$\epsilon_i$ of  the same sign.  Table  \ref{table: main results}
has the association between various classes of reciprocal geodesics, the types of compositions they correspond to, and their growth rate.

\hspace{-1in}\begin{table}
\begin{tabular}{|l|l|l|l||||||||}

 \hline\xrowht{20pt}
{\bf Reciprocal Geodesics}  & \,{\bf Bijection To} & \,{\bf Cardinality}\\
\hline\xrowht{25pt}
    Length $4t$  & \, $ C_t$: compositions of $t$ & \,$=2^{t-1}$  \\  
    
    \hline\xrowht{25pt}
Length $4t$ in $S_D=\mathcal{E}_{0}^D$
 & \,\vtop{\hbox{\strut $C_{t,D}$:  Compositions of $t$}\hbox{\strut parts bounded by $D$}} & \,$\sim\left(\frac{\alpha_D -1}{2+(D+1)(\alpha_D -2)}\right)\alpha_{D}^{t} $    \\

\hline\xrowht{25pt}
   $\vtop{\hbox{\strut Length $4t$ in $\mathcal{E}_{2n}^{1}$}\hbox{\strut }}$  & $C_t^{2n,1}:$ $2n$ excursions of depth $>1$&  =${t}\choose{2n}$ \\

   \hline\xrowht{25pt}
  $\vtop{\hbox{\strut Length $4t$ in $\mathcal{E}_{2}^{D}$}\hbox{\strut  }}$ &  $C_t^{2,D}$: 
  $2$ excursions of depth $>D$ & 
  \vtop{\hbox{\strut  
  $\sim \left( \frac{d_{D}^{2}}{\alpha_{D}^{D}\left(\alpha_{D}-1\right)}\right) t\alpha_D^{t}$}\hbox{\strut }} \\
  
\hline

\end{tabular}
\vskip15pt
  \centering 
  \caption{Various classes of reciprocal geodesics and their growth}\label{table: main results}
\end{table}



 It is well-known that the primitive elements in the modular group $G$ are comprised of 
\begin{itemize}
\item one conjugacy class of order two corresponding to the element $a$,
\item one conjugacy class of order three corresponding to the element $b$ and its inverse,
 \item one conjugacy class of parabolic elements corresponding to the element $ab$ and its inverse, and
  \item infinitely many  conjugacy classes  of maximal cyclic  hyperbolic subgroups.
\end{itemize}

We remind the reader that a hyperbolic element in $PSL(2,\mathbb{Z})$
that represents a reciprocal geodesic on $S$ has axis passing through an order two fixed point. As a result, such a hyperbolic element is conjugate to itself in $PSL(2,\mathbb{Z})$.

Recall that   $C_{t,D}$ denotes  the set of compositions of $t$ with parts bounded by $D$. Counting this set  involves using the recursion relation: $|C_{t,D}|=\sum_{i=1}^D |C_{t-i,D}|$ for which details can be found in \cite{Dr}.    We have
\begin{equation}
\label{boundedcompositions}
|C_{t,D}|=
rnd\bigg(\frac{\alpha_D -1}{2+(D+1)(\alpha_D -2)}\alpha_D^{t}\bigg)
\end{equation}
where 
$rnd(x)=\lfloor{x+\frac{1}{2}}\rfloor$ and $\alpha_D$ is the unique positive root of the polynomial $z^D-z^{D-1}-\dots-1$ (see \cite{Dr} and \cite{BasSuz} for details).  
We remark that the coefficient $\frac{\alpha_D -1}{2+(D+1)(\alpha_D -2)}$ in equation (\ref{boundedcompositions}) only depends on $D$ and thus we denote it by $d_D$.  We note that  $2(1-2^{-D}) \leq  \alpha_D < 2$ and the $\alpha_D$ are increasing as $D$ increases. For the details see \cite{Dr}.


\section{Growth rates of reciprocal geodesics}

Let $n$ be some non-negative integer, and $D \geq 1$ a positive integer. The focus of this work is on reciprocal geodesics with $2n$ excursions of depth bigger than $D$. Namely,  we investigate  the  growth rates  of such    reciprocal geodesics and how they depend on the parameters $n$ and $D$. Since reciprocal geodesics always have an even number of excursions, we fix a positive integer $n$ and consider reciprocal geodesics with exactly $2n$ excursions of depth greater than $D$, and word length $4t$.  

We begin by setting up a bijection between reciprocal geodesics of length $4t$ on the modular surface, and compositions of $t$.
Let $\mathcal{N}_{4t}$ be the set of normal forms for a reciprocal word of length $4t$ in $G=\mathbb{Z}_2\ast\mathbb{Z}_3$, that is, elements of the form $ab^{\epsilon_1}\dots ab^{\epsilon_t}ab^{-\epsilon_t}\dots ab^{-\epsilon_1}, \epsilon_i=\pm1$.  Let $N_{4t}$ be the set of conjugacy classes of elements in $\mathcal{N}_{4t}$, i.e. the set of reciprocal geodesics with length $4t$. 
For each conjugacy class in $N_{4t}$ there are exactly two distinct representatives from $\mathcal{N}_{4t}$, see either Sarnak \cite{Sar} or Lemma 3.6 of \cite{BasSuz}.   Thus, the map $\xi:\mathcal{N}_{4t}\to N_{4t}$, which sends a normal form to its conjugacy class, is 2-1. 
Next, let $X_t=\{(\epsilon_1,\dots,\epsilon_t) : \epsilon_i=\pm1\}$, i.e. the set of binary $t$-tuples.  Then the map $\Phi: \mathcal{N}_{4t}\to X_t$, given by $\Phi(ab^{\epsilon_1}\dots ab^{\epsilon_t}ab^{-\epsilon_t}\dots ab^{-\epsilon_1})=(\epsilon_1,\dots,\epsilon_t)$, is 1-1. 
Now if we projectivize $X_t$, that is   $PX_t=X_t/\{\pm1\}$ and an  element of  $PX_t$ is the class, $\{(\epsilon_1,\dots,\epsilon_t), (-\epsilon_1,\dots,-\epsilon_t)\}$, we see that  the map $\tau:X_t\to PX_t$ which sends a $t$-tuple to its corresponding class, is 2-1. Lastly, the set of projectivized  binary $t$-tuples is in one to correspondence with the compositions of $t$. These correspondences are summarized in  figure \ref{fig: diagram}.
Thus it follows that $|N_{4t}|=\frac{1}{2}|\mathcal{N}_{4t}|=\frac{1}{2}|X_t|=\frac{1}{2}(2|PX_t|)=|PX_t|=|C_t|$, and we have shown that reciprocal geodesics are in 1-1 correspondence with compositions, $C_t$. In fact, more is true. 
This  1-1 correspondence persists  if we pass to the subclasses of reciprocal geodesics which are of interest in this article. Namely, the $D$-low lying reciprocal geodesics are in bijection with  compositions with parts bounded by $D$, and the 
reciprocal geodesics with $2n$ excursions of depth bigger than $D$ (that is, $\mathcal{E}_{2n}^D$) are in 1-1 correspondence with compositions with exactly $n$ parts that are bigger than $D$ (that is, $C_t^{n,D}$).




\begin{figure}\label{fig: diagram}
\begin{center}
\begin{tikzcd}
\mathcal{N}_{4t} \arrow[r, "1-1"] \arrow[d, "2-1"]
& X_t   \arrow[d, "2-1"] \\
N_{4t}
& PX_t \arrow[r, "1-1"]& C_{t}
\end{tikzcd}
\end{center}
\caption{Correspondence between $N_{4t}$ and   $C_{t}$}
\end{figure}

 \subsection{The case of   $2n$ excursions of depth bigger than 1}

Denote the subset of $N_{4t}$ consisting of reciprocal words with exactly $2n$ excursions  of depth greater than one by $N_{4t}^{(2n)}$. The corresponding subset of $PX_t$ consists of the classes of $t$-tuples in which there are exactly $n$ runs of $(+1)$ or $(-1)$ of length at least two starting from the left.  Denote this subset by $PX_t^{(n)}$.  (For example, the class $\{\pm(1,1,-1,1,-1,-1,-1)\}\in PX_7^{(2)}$, since there are seven entries with two runs of length at least two. The first is a run of two +1's and the second is a run of three -1's.)  Restricting the mappings from the diagram to these subsets, we have that $|N_{4t}^{(2n)}|=|PX_t^{(n)}|$.

 \begin{theorem} \label{thm: 2n excursions}
For  any integer $n \geq 0$,
\begin{displaymath}
\big|\{\gamma \in  \mathcal{E}_{2n}^{1}: |\gamma|=4t\}\big|={{t}\choose{2n}}   \text{ and } 
\frac{\big|\{\gamma \in  \mathcal{E}_{2n}^{1}: |\gamma|=4t\}\big|}
{t^{2n}} \xrightarrow{t\to\infty} \frac{1}{2n!}
\end{displaymath}

\end{theorem}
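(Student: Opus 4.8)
The plan is to lean entirely on the bijective framework just established. By the correspondence summarized above (Figure \ref{fig: diagram}) and its refinement to subclasses, $\{\gamma \in \mathcal{E}_{2n}^{1} : |\gamma| = 4t\}$ is in one-to-one correspondence with $C_t^{n,1}$, the compositions of $t$ with exactly $n$ parts bigger than $1$ (equivalently, exactly $n$ parts of size $\geq 2$, all remaining parts equal to $1$). Thus the whole theorem reduces to the purely combinatorial identity $|C_t^{n,1}| = \binom{t}{2n}$, together with the elementary asymptotics of a binomial coefficient.

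To evaluate $|C_t^{n,1}|$ I would set up a bivariate generating function in which each part of a composition is weighted by $x$ raised to its size and each part of size $\geq 2$ is additionally marked by $y$. A part of size $1$ then contributes $x$, and a part of size $\geq 2$ contributes $y(x^2 + x^3 + \cdots) = yx^2/(1-x)$. Since a composition is an ordered sequence of parts, summing over the number of parts gives $\sum_{k \geq 0}\bigl(x + yx^2/(1-x)\bigr)^k = \bigl(1 - x - yx^2/(1-x)\bigr)^{-1}$. Writing $v = x^2/(1-x)$ and expanding geometrically, $\frac{1}{1-x-yv} = \sum_{m \geq 0} (yv)^m/(1-x)^{m+1}$, the coefficient of $y^n$ is $v^n/(1-x)^{n+1} = x^{2n}/(1-x)^{2n+1}$. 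Hence $|C_t^{n,1}| = [x^t]\, x^{2n}/(1-x)^{2n+1} = [x^{t-2n}]\,(1-x)^{-(2n+1)} = \binom{t}{2n}$.

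For a more transparent, bijective version of the same count I would argue directly: a composition in $C_t^{n,1}$ is determined by the ordered values $a_1, \dots, a_n \geq 2$ of its big parts together with the number of $1$'s placed in each of the $n+1$ gaps around them. Writing $a_i = 2 + c_i$ with $c_i \geq 0$ and letting $g_0, \dots, g_n \geq 0$ record the gap contents, the condition that the parts sum to $t$ becomes $\sum_{i=1}^n c_i + \sum_{\ell=0}^n g_\ell = t - 2n$. This is a distribution of $t - 2n$ indistinguishable units into $2n+1$ ordered nonnegative slots, of which there are $\binom{(t-2n)+2n}{2n} = \binom{t}{2n}$. As sanity checks, $n = 0$ gives the single all-ones composition and $\binom{t}{0} = 1$, while $t = 4,\, n = 1$ gives the six compositions $(4),(3,1),(1,3),(2,1,1),(1,2,1),(1,1,2)$, matching $\binom{4}{2} = 6$.

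The limit is then immediate: $\binom{t}{2n} = \frac{t(t-1)\cdots(t-2n+1)}{(2n)!}$ is a monic degree-$2n$ polynomial in $t$ divided by $(2n)!$, so dividing by $t^{2n}$ and letting $t \to \infty$ yields $1/(2n)!$, which is the asserted constant. I do not expect a genuine obstacle here; the only real content is counting $C_t^{n,1}$ cleanly, and the single point to be careful about is keeping the $n$ big-part excesses and the $n+1$ small-part gaps as separate coordinates, so as neither to over- nor under-count the orderings of the parts. Both the generating-function collapse and the stars-and-bars bookkeeping are precisely what enforce this.
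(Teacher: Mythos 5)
Your proposal is correct, and it reaches the same count $\binom{t}{2n}$ through the same overall reduction the paper uses (pass from $\{\gamma\in\mathcal{E}_{2n}^{1}:|\gamma|=4t\}$ through the bijection of Figure \ref{fig: diagram} to a purely combinatorial family), but the counting step itself is executed differently. The paper works with the projectivized binary $t$-tuples $PX_t^{(n)}$ and disposes of the count in one line: an element with exactly $n$ runs of length at least two is determined by marking the starting and ending positions of those runs, i.e.\ by choosing $2n$ of the $t$ slots, giving $\binom{t}{2n}$ directly. You instead work on the composition side $C_t^{n,1}$ and give two arguments: a bivariate generating function whose $y^n$-coefficient collapses to $x^{2n}/(1-x)^{2n+1}$, and a stars-and-bars parametrization by the $n$ big-part excesses $c_i=a_i-2$ together with the $n+1$ gap counts of $1$'s, yielding $\binom{(t-2n)+2n}{2n}$. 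These are equivalent reparametrizations of the same set, but your version makes explicit the bijectivity that the paper's ``choose $2n$ positions'' phrasing leaves implicit (in particular, that every choice of $2n$ positions yields a valid configuration, including adjacent long runs), and the generating function gives an independent check; the cost is more bookkeeping for what is ultimately a one-line count. The asymptotic step is identical in both: $\binom{t}{2n}/t^{2n}\to 1/(2n)!$, which is how the constant written as $\frac{1}{2n!}$ in the statement should be read.
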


\begin{proof}
Fix integers $t>0, n\geq 0$.  We have reduced the problem to computing $|PX_t^{(n)}|$.  Let $x\in PX_t^{(n)}$.  Then $x$ has length $t$ with exactly $n$ runs of +1 or -1 of length at least two.  Each of the $n$ runs has a starting position and ending position which amounts to choosing $2n$ of the $t$ positions. 
 Thus, any such $x$ arises by selecting $2n$ slots among the $t$.  That is, $|PX_t^{(n)}|={{t}\choose{2n}}$.  It follows that $|PX_t^{(n)}|\sim \dfrac{t^{2n}}{(2n)!},$ as $t\to\infty$ ($t\in\mathbb{N}$).

\end{proof}

\subsection{The case of two excursions of depth  bigger than  $D$}

We next focus on reciprocal geodesics with two excursions. 

We, as usual, identify a  reciprocal geodesic of length $4t$ with its normal form in $PX_{t}^{1,D}$, the $t$-tuples with one run of length greater than $D$ and all other runs at most $D$. 
This projective binary sequence can in turn be identified with compositions of $t$ with one part, which we denote by $r$, greater than $D$,  and all other parts at most $D$. We   denote this set by $C_{t}^{1,D}$. In order to count this set we decompose it   in the following way. Let $r \geq D+1$ be the length of the run in $C_{t}^{1,D}$, and let $k$ be the position in the $t$-tuple $PX_{t}^{1,D}$ where the  $r$ run starts. Note that  $1 \leq k \leq t-r+1$. 
\vskip10pt

\begin{figure}
\vspace{.5in}
\begin{overpic}[scale=.75]{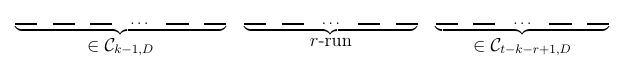}
\put(39.5,12){$\downarrow$}
\put(37,15){\text{$k^{th}$ slot}}
\put(70.5,12){$\downarrow$}
\put(67,15){$(k+r)^{th}$ slot}
\put(94.5,12){$\downarrow$}
\put(92,15){$t^{th}$ slot}
\end{overpic}
\caption{$2\leq k\leq t-r$}
\label{fig2:slotsarg}
\end{figure}
\vskip10pt


For $k$ and $r$ fixed and  in the permitted ranges listed above, we have

\begin{equation} \label{eq: counting excursions}
|C_{t}^{1,D}|=
|C_{k-1, D}||C_{t-k-r+1, D}|  \text{   for }  1 \leq k \leq t-r+1
\end{equation}
This follows  by counting the number of possible compositions to  the left and to the right of the $r$ run and then observing  that there is only one possible $r$ run starting in the $k^{th}$
slot.
See Figure \ref{fig2:slotsarg} for  a schematic. Here we use the convention that  $|C_{0, D}|=1$.

We remark that up until this point the discussion works for $D\geq 1$. For the rest of this section  we assume  $D \geq 2$.  
Summing over the admissible values of $k$ and $r$, we have that

\begin{equation} \label{eq: D excursion formula}
\big|\{\gamma \in  \mathcal{E}_2^D: |\gamma|=4t\}\big|
= \sum_{r=D+1}^{t} \sum_{k=1}^{t-r+1} |C_{k-1, D}||C_{t-k-r+1, D}| 
\end{equation}

Plugging  the appropriate values of expression  (\ref{boundedcompositions})  into  (\ref{eq: D excursion formula}) we obtain the lower estimate for 
$\big|\{\gamma \in  \mathcal{E}_2^D: |\gamma|=4t\}\big|$
as
    
 \begin{equation}
\sum_{r=D+1}^{t} \sum_{k=1}^{t-r+1} \left(d_{D} \alpha_{D}^{k-1}-\frac{1}{2} \right)
\left(d_{D} \alpha_{D}^{t-k-r+1}-\frac{1}{2} \right)
\end{equation}
and the upper estimate as,
\begin{equation} 
\sum_{r=D+1}^{t} \sum_{k=1}^{t-r+1}\left(d_{D} \alpha_{D}^{k-1}+\frac{1}{2} \right)
\left(d_{D} \alpha_{D}^{t-k-r+1}+\frac{1}{2} \right)
\end{equation}

The upper and lower estimates are of the form,

\begin{equation} \label{eq: lower and upper bounds}
\sum_{r} \sum_{k} d^{2} \alpha^{t-r} +
a\sum_{r} \sum_{k} d \alpha^{k-1}+
a\sum_{r} \sum_{k} d \alpha^{t-k-r+1}+
\sum_{r} \sum_{k} \frac{1}{4}
\end{equation}

where $a=\pm \frac{1}{2}$ and, for ease of notation, we have set $d=d_{D}$ and  $\alpha=\alpha_D$.

We are interested in what happens to the terms of  expression 
(\ref{eq: lower and upper bounds}) as $t \rightarrow \infty$.

To this end we first prove a lemma. 

\begin{lemma} \label{lem: growth of terms}
The sums in the following are over the admissible values of $r$ and $k$.

\begin{enumerate}
\item  $d^{2} \sum_{r} \sum_{k}  \alpha^{t-r} 
\sim \frac{d^{2}}{\alpha^{D} (\alpha-1)} t \alpha^{t}$
 \item $d \sum_{r} \sum_{k} \alpha^{k-1}=O(\alpha^{t})$
 \item $d \sum_{r} \sum_{k}  \alpha^{t-k-r+1}=O(\alpha^{t})$

\end{enumerate}

\end{lemma}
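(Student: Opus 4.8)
The plan is to evaluate each of the three double sums by carrying out the inner summation over $k$ first and then reducing the remaining sum over $r$ to a standard geometric or arithmetic--geometric series through a change of index. Throughout I will use that $\alpha = \alpha_D > 1$ (indeed $\alpha_D \ge \tfrac32$ for $D\ge 2$), so that each sum is dominated by its largest terms and the relevant tail series converge. Part (1) requires a genuine asymptotic equivalence, so I will compute it essentially exactly; parts (2) and (3) only require an $O(\alpha^t)$ bound, so crude geometric estimates will suffice.

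For (1), since the summand $\alpha^{t-r}$ is independent of $k$, the inner sum just contributes the number of admissible $k$, namely $t-r+1$, giving
\[
\sum_r \sum_k \alpha^{t-r} = \sum_{r=D+1}^{t} (t-r+1)\,\alpha^{t-r}.
\]
Reindexing by $j = t-r$ (so $t-r+1 = j+1$, with $j$ from $0$ to $M:=t-D-1$) turns this into the arithmetic--geometric sum $\sum_{j=0}^{M}(j+1)\alpha^{j}$. Writing $j = M-i$ and factoring out $\alpha^M$, this equals $\alpha^M\big[(M+1)\sum_{i=0}^{M}\alpha^{-i} - \sum_{i=0}^{M} i\,\alpha^{-i}\big]$; as $M\to\infty$ the first inner sum tends to $\alpha/(\alpha-1)$ and the second to a finite constant, so the whole expression is $\sim (M+1)\,\alpha^{M}\,\tfrac{\alpha}{\alpha-1} \sim t\,\alpha^{t-D}/(\alpha-1)$. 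Multiplying by $d^2$ and using $\alpha^{t-D}=\alpha^t/\alpha^D$ yields the claimed $\frac{d^2}{\alpha^D(\alpha-1)}\,t\,\alpha^t$.

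For (2) and (3), the inner sums are honest geometric series. In (2) I get $\sum_{k=1}^{t-r+1}\alpha^{k-1} = (\alpha^{t-r+1}-1)/(\alpha-1)$, and in (3) the substitution $m = t-r+1-k$ shows the inner sum is the identical expression $\sum_{m=0}^{t-r}\alpha^{m}$; so (2) and (3) reduce to the same computation. Summing over $r$ and reindexing by $j = t-r+1$ (running from $1$ to $t-D$) gives $\sum_{j=1}^{t-D}\alpha^{j} = O(\alpha^{t-D+1}) = O(\alpha^t)$, the linear-in-$t$ correction coming from the $-1$ terms being absorbed into the bound. Multiplying by the constant $d$ preserves $O(\alpha^t)$.

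The only delicate point will be the arithmetic--geometric estimate in (1): I must check that the $\sum_i i\,\alpha^{-i}$ term, together with the constant discrepancy between $M+1=t-D$ and $t$, contributes only at order $o(t\,\alpha^t)$, so that the leading constant is exactly $\tfrac{\alpha}{\alpha-1}$ and, after accounting for the shift $\alpha^{M}=\alpha^{t-D-1}$, the factor is precisely $\tfrac{1}{\alpha^D(\alpha-1)}$. Once this is in hand, parts (2) and (3) are immediate and, crucially, their $O(\alpha^t)$ growth is dominated by the $t\,\alpha^t$ growth in (1)---which is exactly the point that will let the cross terms and constant term in expression (\ref{eq: lower and upper bounds}) be discarded when extracting the leading asymptotics.
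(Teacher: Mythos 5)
Your proposal is correct and follows essentially the same route as the paper: in each part the inner sum over $k$ is evaluated first (giving the factor $t-r+1$ in (1) and a geometric series in (2) and (3)), and the remaining sum over $r$ is split into a dominant term of order $t\,\alpha^{t}$ plus a bounded arithmetic--geometric correction of order $O(\alpha^{t})$. The only cosmetic differences are your reindexing $j=t-r$ in (1) and your explicit substitution showing (3) reduces to (2), which the paper leaves as ``similar.''
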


\begin{proof} To ease notation, we use the convention 
$d=d_{D}$ and $\alpha=\alpha_{D}$.
To prove item (1), 
note that $d^{2} \sum_{k}  \alpha^{t-r}
=d^{2} \alpha^{t-r} \left(t-r+1 \right)$. 
Summing this over the admissible $r$'s we have,

$$
d^{2} \sum_{r=D+1}^{t} \alpha^{t-r} \left(t-r+1 \right)=
$$
$$
d^{2}\alpha^{t} \left[ t \sum_{r=D+1}^{t}  \alpha^{-r}  
-   \sum_{r=D+1}^{t} (r-1)\alpha^{-r} \right]=
$$
$$
\left(d^{2} \sum_{r=D+1}^{t} \alpha^{-r}\right)t\alpha^{t}
- \left( d^{2}  \sum_{r=D+1}^{t} (r-1)\left(\frac{1}{\alpha}\right)^{r}
\right)\alpha^{t}=
$$
$$
d^{2} \left(\frac{\alpha^{-D}-\alpha^{-t}}{\alpha-1}\right) t\alpha^{t} +O(\alpha^{t}).
$$

To prove item (2),  we first note that a short computation yields

$$
\alpha^{-1} \sum_{k=1}^{t-r+1} \alpha^{k}=
\frac{\alpha^{t-r+1}-1}{\alpha -1}.
$$
Now, summing over the admissible $r$'s we obtain,
$$
\sum_{r=D+1}^{t} \frac{\alpha^{t-r+1}-1}{\alpha -1}=
$$
$$
\left( \frac{1}{\alpha-1}\right) 
\left[ \alpha^{t+1} \sum_{r=D+1}^{t} \alpha^{-r} 
-(t-D) \right]=O(\alpha^{t}).
$$

The proof of item (3) follows in a similar fashion  to item (2).
\end{proof}

 We are now ready to prove

\begin{theorem} \label{thm: 2 excursions D-depth}
For $D \geq 2$, 
\begin{displaymath}
\frac{\big|\{\gamma \in  \mathcal{E}_2^D: |\gamma|=4t\}\big|}
{t  \alpha_D^{t}} \xrightarrow{t\to\infty}
\frac{d_D^2}{\alpha_D^{D}(\alpha_D -1)}. 
\end{displaymath}
\end{theorem}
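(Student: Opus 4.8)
The plan is a squeeze argument built on Lemma~\ref{lem: growth of terms}. The exact count is the double sum (\ref{eq: D excursion formula}), and the rounding estimate in (\ref{boundedcompositions}) gives $d_D\alpha_D^s-\tfrac12\le|C_{s,D}|\le d_D\alpha_D^s+\tfrac12$ for every $s\ge0$. Hence each summand $|C_{k-1,D}||C_{t-k-r+1,D}|$ is trapped between $(d\alpha^{k-1}-\tfrac12)(d\alpha^{t-k-r+1}-\tfrac12)$ and $(d\alpha^{k-1}+\tfrac12)(d\alpha^{t-k-r+1}+\tfrac12)$, where $d=d_D$ and $\alpha=\alpha_D$. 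Summing over the admissible pairs $(k,r)$ sandwiches $|\{\gamma\in\mathcal{E}_2^D:|\gamma|=4t\}|$ between the lower and upper estimates, each of which expands into the common form (\ref{eq: lower and upper bounds}) with $a=-\tfrac12$ and $a=+\tfrac12$ respectively.

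Next I would divide (\ref{eq: lower and upper bounds}) by $t\alpha^t$ and treat its four pieces in turn. By Lemma~\ref{lem: growth of terms}(1) the leading piece satisfies $d^2\sum_r\sum_k\alpha^{t-r}\sim\tfrac{d^2}{\alpha^D(\alpha-1)}\,t\alpha^t$, so after division it converges to the claimed constant. The two cross terms are $O(\alpha^t)$ by parts (2) and (3) of the lemma, so dividing by $t\alpha^t$ sends them to $0$ at rate $O(1/t)$. The only piece not covered by the lemma is the constant term; reindexing by $j=t-r+1$ gives $\sum_r\sum_k\tfrac14=\tfrac14\sum_{r=D+1}^{t}(t-r+1)=\tfrac14\sum_{j=1}^{t-D}j=O(t^2)$, and since $\alpha=\alpha_D\ge2(1-2^{-D})>1$ for $D\ge2$, division by $t\alpha^t$ yields $O(t/\alpha^t)\to0$. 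Therefore both the lower ($a=-\tfrac12$) and the upper ($a=+\tfrac12$) estimates, once normalized by $t\alpha^t$, tend to the same value $\tfrac{d_D^2}{\alpha_D^D(\alpha_D-1)}$, and the squeeze theorem forces the normalized count to that limit.

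The substantive content is concentrated in Lemma~\ref{lem: growth of terms}(1): the inner $k$-sum contributes the slot-count $t-r+1$, producing the extra factor of $t$ responsible for the $t\alpha^t$ growth, while the competing $(r-1)$-weighting yields only a lower-order $O(\alpha^t)$ correction. That estimate is an assumed input here, so at the level of the theorem the only real work is bookkeeping: confirming that every non-leading term vanishes after division by $t\alpha^t$. The one piece falling outside the lemma is the $O(t^2)$ constant term, but because $\alpha>1$ forces $\alpha^t$ to outgrow every power of $t$, this term disappears fastest of all, as $O(t/\alpha^t)$, while the cross terms vanish as $O(1/t)$. Thus only the leading constant survives, and the theorem follows.
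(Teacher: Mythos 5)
Your proposal is correct and follows essentially the same route as the paper: sandwich the exact count between the two estimates obtained from the rounding in (\ref{boundedcompositions}), observe via Lemma \ref{lem: growth of terms} that the first term of (\ref{eq: lower and upper bounds}) is asymptotic to $\frac{d_D^2}{\alpha_D^{D}(\alpha_D-1)}t\alpha_D^t$ while the remaining terms (including the $O(t^2)$ constant term) are lower order, and conclude that both estimates share the same limit after normalizing by $t\alpha_D^t$. Your explicit computation of the fourth term and the rates of decay is slightly more detailed than the paper's one-line dismissal, but the argument is the same.
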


\begin{proof}
Consider the terms of expression 
(\ref{eq: lower and upper bounds}).
Lemma  \ref{lem: growth of terms} and the fact that the fourth term 
 grows like $t^2$, ensures that  the  first term, whose asymptotic is  $t \alpha^{t}$,  dominates the expression as $t \rightarrow \infty$.  Since the first term does not depend on the constant $a$, we may conclude that the lower and upper estimates of 
${\big|\{\gamma \in  \mathcal{E}_2^D: |\gamma|=4t\}\big|}$   have the same asymptotic.
\end{proof}

\bibliographystyle{amsalpha}

\end{document}